\theoremstyle{plain}
\newtheorem{theorem}{Theorem}[section]
\newtheorem{lemma}[theorem]{Lemma}
\theoremstyle{definition}
\theoremstyle{remark}
\newtheorem{rem}{Remark}[section]
\theoremstyle{remark}
\newcommand{\Q}{\mathbb Q}
\newcommand{\R}{\mathbb R}
\newcommand{\C}{\mathbb C}
\newcommand{\Z}{\mathbb Z}
\begin{document}
\title{Origami Constructions of Rings of Integers of Imaginary Quadratic Fields}

\author{J\"{u}rgen Kritschgau$^1$}
\address{$^1$Department of Mathematics\\ Iowa State University\\ 396 Carver Hall, 411 Morrill Road\\Ames, IA 50011}
\author{Adriana Salerno$^2$}
\address{$^2$Department of Mathematics\\ Bates College\\ 3 Andrews Rd\\ Lewiston, ME 04240}

\email{jkritsch@iastate.edu, asalerno@bates.edu}

\begin{abstract}
In the making of origami, one starts with a piece of paper, and through a series of folds along seed points one constructs complicated three-dimensional shapes. Mathematically, one can think of the complex numbers as representing the piece of paper, and the seed points and folds as a way to generate a subset of the complex numbers. Under certain constraints, this construction can give rise to a ring, which we call an origami ring. We will talk about the basic construction of an origami ring and further extensions and implications of these ideas in algebra and number theory, extending results of Buhler, et.al. In particular, in this paper we show that it is possible to obtain the ring of integers of an imaginary quadratic field through an origami construction. 
\end{abstract}

\keywords{origami, rings, imaginary quadratic extension, ring of algebraic integers}
\subjclass[2010]{11R04, 11R11}

\maketitle

\section{Introduction}

In origami, the artist uses intersections of folds as reference points to make new folds. This kind of construction can be extended to points on the complex plane. In \cite{Buhler2010}, the authors define one such mathematical construction. In this construction, one can think of the complex plane as representing the ``paper", and lines representing the ``folds". The question they explored is which points in the plane can be constructed through iterated intersections of lines, starting with a prescribed set of allowable angles and only the points 0 and 1. 

First, we say that the set $S=\{0,1\}$ is the set of \emph{seed points}. We fix a set $U$ of angles, or ``directions", determining which lines we can draw through the points in our set. Thus, we can define the ``fold" through the point $p$ with angle $u$ as the line given by $$L_u(p):=\{p+ru:r\in\R\}.$$

Notice that $U$ can also be comprised of points (thinking of $u\in U$ as defining a direction) on the unit circle, i.e. the circle group $\mathbb{T}$. Moreover, $u$ and $-u$ define the same line, so we can think of the directions as being in the quotient group $\mathbb{T}/\{\pm1\}$. 

Finally, if $u$ and $v$ in $U$ determine distinct folds, we say that $$I_{u,v}(p,q)=L_u(p)\cap L_v(q)$$ is the unique point of intersection of the lines $L_u(p)$ and  $L_v(q)$. 

Let $R(U)$ be the set of points obtained by iterated intersections $I_{u,v}(p,q)$, starting with the points in $S$. Alternatively, we define $R(U)$ to be the smallest subset of $\mathbb{C}$ that contains $0$ and $1$ and $I_{u,v}(p,q)$ whenever it contains $p$ and $q$, and $u, v$ determine distinct folds. The main theorem of \cite{Buhler2010} is the following.

\begin{theorem}\label{Buhler}
If $U$ is a subgroup of $\mathbb{T}/\{\pm1\}$, and $|U|\geq3$, then $R(U)$ is a subring of $\mathbb{C}$. 
\end{theorem}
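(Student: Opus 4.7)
The plan is to verify directly that $R(U)$ is closed under addition, negation, and multiplication inside $\mathbb{C}$; since $0, 1 \in R(U)$ by construction, this will exhibit $R(U)$ as a subring.

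The starting point is a decomposition lemma. Fix any two distinct $u, v \in U$. Because $u, v$ are $\R$-linearly independent in $\mathbb{C}$, every $p \in \mathbb{C}$ decomposes uniquely as $p = p_u + p_v$ with $p_u \in \R u$ and $p_v \in \R v$. The formulas $p_v = L_u(p) \cap L_v(0)$ and $p_u = L_v(p) \cap L_u(0)$ show that whenever $p \in R(U)$ both components already lie in $R(U)$, and varying $u, v$ over $U$ yields all coordinate projections relative to pairs from $U$.

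Closure under addition follows next from the parallelogram trick: given $p, q \in R(U)$ and distinct $u, v \in U$, the four points $p$, $a := L_u(p) \cap L_v(q)$, $q$, $b := L_v(p) \cap L_u(q)$ form a parallelogram with diagonals $pq$ and $ab$ sharing their midpoint, so $p+q = a+b$ as an algebraic identity. Converting this identity into an actual intersection construction of $p+q$ is where the third direction $w \in U$ (available because $|U| \geq 3$) enters: I would realize $p+q$ as the single intersection $L_u(q + p_v) \cap L_v(q + p_u)$, thereby reducing the problem to constructing translates of $q$ by real multiples of a single direction in $U$. Each such translation is then set up as an intersection using the third direction $w$ to supply a non-parallel line through a constructible anchor point, breaking the apparent circularity. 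Closure under negation falls out of the same translation machinery applied around $0$, together with one sign-flip construction exhibiting $-1$ as an intersection in the fixed basis $(u,v)$.

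Closure under multiplication is where the group structure of $U$ (not merely a set of directions) becomes essential. Multiplication by an element of $\mathbb{T}$ is a rotation, and the fact that $uv \in U$ for $u, v \in U$ lets one realize rotations by the argument of $u$ as intersections whose directions stay inside $U$. Combined with the additive constructions, this reduces the product $pq$ to a sum of rotated real translates, each already shown to be constructible. The main obstacle I expect is precisely the step that promotes the parallelogram identity $p+q = a+b$ into an honest geometric construction of $p+q$ from lines with directions in $U$: this is the point at which the hypothesis $|U| \geq 3$ is indispensable, and it is mirrored in the multiplication step where the group law of $U$ is exactly what keeps every required direction inside $U$.
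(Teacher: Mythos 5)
First, a caveat about the comparison: the paper does not actually prove this statement --- Theorem \ref{Buhler} is imported from \cite{Buhler2010}, and the only trace of its proof here is the closed form $(\ast)$ and the Symmetry/Reduction/Linearity/Projection/Rotation properties listed in Section 2.1. Your outline is recognizably the same strategy as that original proof: your decomposition lemma is exactly the Projection and Reduction properties (i.e.\ $p = I_{u,v}(p,0)+I_{v,u}(p,0)$), and your use of the group law via rotations is the Rotation property. The architecture is right, and your identity $p+q = L_u(q+p_v)\cap L_v(q+p_u)$ checks out. The problems are concentrated in the two steps you defer.

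The first is the one you yourself flag: constructing the translate $q+p_v$ with $p_v\in\R v$. Your proposed mechanism --- use $w$ to supply a non-parallel line through a ``constructible anchor point'' --- does not yet break the circularity, because the natural anchor on $L_w(q+p_v)$ (the $v$-component of $q+p_v$ in the basis $(v,w)$, which equals $p_v$ plus the $v$-component of $q$) is itself a sum of two collinear constructible points, i.e.\ another instance of the same translation problem. What is actually needed is an explicit three-direction addition of collinear points, for instance $x+y = I_{w,v}\bigl(I_{v,u}(I_{w,u}(x,0),y),0\bigr)$ for $x,y\in\R v$, verified directly from $(\ast)$; without writing down some such identity, additive closure is not established. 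The second and more serious gap is multiplication. You reduce $pq$ to ``a sum of rotated real translates, each already shown to be constructible,'' but if this means writing $q=\alpha u+\beta v$ with $\alpha,\beta\in\R$ and $pq=\alpha(pu)+\beta(pv)$, then $\alpha$ and $\beta$ are arbitrary real numbers, and scaling a constructible point by an arbitrary real is not one of your established operations. The Rotation property plus the group law does give $uR(U)\subseteq R(U)$ for $u\in U$, but to reach $pq$ one needs an induction on the construction of one of the factors, using Linearity in the form $rI_{u,v}(x,0)=I_{u,v}(rx,0)$ to push the real coefficients back onto points already known to lie in $R(U)$. As written, your sketch asserts the conclusion of that induction without setting it up, so the multiplicative closure --- the step where $|U|\ge 3$ and the subgroup hypothesis do their real work --- remains unproved.
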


Let $U_n$ denote the cyclic group of order $n$ generated by $e^{i\pi/n}\mod\{\pm1\}$.  Then Buhler, et. al., obtain the following corollary. 

\begin{theorem}
Let $n\geq3$. If $n$ is prime, then $R(U_n)=\mathbb{Z}[\zeta_n]$ is the cyclotomic integer ring. If $n$ is not prime, then $R(U_n)=\mathbb{Z}[\zeta_n,\frac{1}{n}]$. 
\end{theorem}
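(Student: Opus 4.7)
The plan is to use Theorem~\ref{Buhler} to treat $R(U_n)$ as a ring and then identify it through explicit computation of the basic intersections. Taking representatives $u = \zeta_{2n}^j, v = \zeta_{2n}^k$ for elements of $U_n$, a direct calculation with the parametrizations of $L_u(p)$ and $L_v(q)$ yields
\[
I_{u,v}(p,q) \;=\; \frac{\zeta_n^j q - \zeta_n^k p + \zeta_n^{j+k}(\bar p - \bar q)}{\zeta_n^j - \zeta_n^k},
\]
and in particular $I_{u,v}(0,1) = (\zeta_n^k - 1)/(\zeta_n^{k-j} - 1)$. Since $\mathbb{Q}(\zeta_n)$ is closed under complex conjugation, induction along the construction gives $R(U_n) \subseteq \mathbb{Q}(\zeta_n)$; and since the only denominators are factors $(1 - \zeta_n^m)$ with $1 \le m \le n-1$, and $\prod_{m=1}^{n-1}(1-\zeta_n^m) = n$, we obtain $R(U_n) \subseteq \mathbb{Z}[\zeta_n, 1/n]$. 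Taking $j=1, k=2$ gives $I_{u,v}(0,1) = 1+\zeta_n$, so $\zeta_n \in R(U_n)$ and $\mathbb{Z}[\zeta_n] \subseteq R(U_n)$.

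For the prime case, I would strengthen the above to $R(U_n) \subseteq \mathbb{Z}[\zeta_n]$ by induction on the construction. The key fact is that for prime $n$ there is a unique prime $\mathfrak{p} = (1 - \zeta_n)$ over $n$, and its residue field $\mathbb{F}_n$ carries the trivial action of $\mathrm{Gal}(\mathbb{Q}(\zeta_n)/\mathbb{Q})$; hence $z \equiv \bar z \pmod{\mathfrak p}$ for every $z \in \mathbb{Z}[\zeta_n]$. Reducing the numerator of $I_{u,v}(p,q)$ modulo $\mathfrak p$ (using $\zeta_n^\ell \equiv 1$) gives $(q-p) - \overline{(q-p)} \equiv 0$, so the denominator $\zeta_n^j - \zeta_n^k$, which generates $\mathfrak p$, divides the numerator in $\mathbb{Z}[\zeta_n]$. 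Induction then yields $R(U_n) = \mathbb{Z}[\zeta_n]$.

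For the composite case, I would exhibit $1/p \in R(U_n)$ for each prime $p \mid n$ and then take products to get $1/n$. For each such $p$, the choice $k = 1$ and $j \equiv 1 - n/p \pmod n$ gives the basic intersection $I = (\zeta_n - 1)/(\zeta_p - 1) \in R(U_n)$; combining $I$ with its complex conjugate $\bar I$ (also in $R(U_n)$, since the seeds $\{0,1\}$ and the set of directions $U_n$ are conjugation-symmetric) via ring operations isolates $1/p$. As a sanity check, for $n = 6$ and $p = 3$ one computes $I = 1/2 - i\sqrt{3}/6$ and $I \bar I = 1/3$; the choice $p = 2$ analogously yields $1/2$, whence $1/6 \in R(U_6)$. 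The main obstacle is this composite case: uniformly verifying across all prime divisors of $n$ that the cyclotomic ratio $(\zeta_n - 1)/(\zeta_p - 1)$ and its conjugate yield exactly $1/p$ after ring manipulation, which requires careful bookkeeping with the ideals $(1 - \zeta_n^m)$ in $\mathbb{Z}[\zeta_n]$ for varying $\gcd(m, n) > 1$.
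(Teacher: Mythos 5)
A preliminary remark: the paper does not prove this statement at all --- it is quoted from Buhler, Butler, de Launey and Graham \cite{Buhler2010} as a known consequence of Theorem \ref{Buhler} --- so your proposal can only be judged on its own terms. On those terms, most of it is sound. Your closed form for $I_{u,v}(p,q)$ agrees with~(\ref{closedform}); the containment $R(U_n)\subseteq\mathbb{Z}[\zeta_n,1/n]$ follows as you say because every denominator $\zeta_n^j-\zeta_n^k$ is a unit multiple of some $1-\zeta_n^m$ dividing $n$; the computation $I_{u,v}(0,1)=1+\zeta_n$ for $j=1,k=2$ gives $\mathbb{Z}[\zeta_n]\subseteq R(U_n)$ once Theorem \ref{Buhler} guarantees $R(U_n)$ is a ring; and the prime case is essentially complete: $\mathfrak{p}=(1-\zeta_n)$ is the unique, totally ramified prime over $n$, so $\bar z\equiv z\pmod{\mathfrak p}$, the numerator reduces to $(q-p)-\overline{(q-p)}\equiv 0$, and the denominator has $\mathfrak p$-valuation exactly $1$, so the quotient stays in $\mathbb{Z}[\zeta_n]$ and induction closes the argument.

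The genuine gap is the composite case, precisely where you flag it. The assertion that combining $I=(\zeta_n-1)/(\zeta_p-1)$ with its complex conjugate ``isolates $1/p$'' is not merely unverified --- it fails as a general mechanism. Multiplying by the complex conjugate only computes the norm down to the maximal real subfield, not to $\mathbb{Q}$: already for $n=10$, $p=5$ one finds $I\bar I=(5-\sqrt5)/10\neq 1/5$, and the surviving factor $(5-\sqrt5)/2$ has positive $\mathfrak p$-valuation, so it must be cleared by a separate argument. Worse, for prime powers the valuations do not even match: for $n=9$, $p=3$ one has $v_{\mathfrak p}(I\bar I)=-4$ while $v_{\mathfrak p}(1/3)=-6$, so no product of $I\bar I$ with an element of $\mathbb{Z}[\zeta_9]$ can equal $1/3$. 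To repair this you need (i) the valuation of $1-\zeta_n^m$ at the primes over each $p\mid n$ (it is a unit when $\zeta_n^m$ has non-prime-power order, and has uniform positive valuation at the primes over $p$ when the order is a power of $p$), so that a suitable power or product of basic intersections reaches $\mathfrak p$-valuation at most $v_{\mathfrak p}(1/p)$ at every $\mathfrak p\mid p$; and (ii) a step that removes the leftover factor coprime to $p$, e.g.\ writing the constructed element as $y/p$ with $y\in\mathbb{Z}[\zeta_n]$ coprime to $p$ and using $sy+tp=1$ to conclude $1/p=s(y/p)+t\in R(U_n)$. Neither ingredient is in your sketch, so the identity $R(U_n)=\mathbb{Z}[\zeta_n,1/n]$ for composite $n$ remains unproved.
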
 

In \cite{nedrenco}, Nedrenco explores whether $R(U)$ is a subring of $\C$ even if $U$ is not a group, and obtains a negative answer, and some necessary conditions for this to be true. The main result is that given the set of  directions $U=\{1,e^{i\alpha},e^{i\beta}\}$, if $\alpha\not\equiv\beta\bmod\pi$ then $R(U)=\Z+z\Z$ for some $z\in\C$. Clearly, this will not always be a ring.

In this paper, we explore the inverse problem, that is, given an ``interesting" subring of $\C$, can we obtain it via an origami construction? The answer is affirmative in the case of the ring of integers of an imaginary quadratic field. 

The next section of this paper delves deeper into the origami construction, in particular the intersection operator. Some properties in this section are crucial for understanding the proofs of our main results. This section also explores an example of an origami construction in more depth, that of the Gaussian integers, since it illustrates the geometric and algebraic approach through a very well known ring. Finally, in Section 3, we state and prove our main result. 

\begin{rem}
Even though Nedrenco essentially proved one of our two results in \cite{nedrenco}, this was accomplished completely independently by us. In fact, our proof is different, since we are interested in the reverse of Nedrenco's question. That is, we explore whether a given ring can be an origami ring. Nedrenco explores the conditions for which his origami construction ends up being a ring of algebraic integers. This distinction is subtle, but important, and we want to clarify that all of what follows, unless otherwise indicated as coming from \cite{Buhler2010}, is original work. 
\end{rem} 

\subsection*{Acknowledgements.} This work was part of the first author's senior Honors Thesis at Bates College, advised by the second author, and we thank the Bates Mathematics Department, in particular Peter Wong, for useful feedback and discussions throughout the thesis process. We would also like to thank the rest of the Honors committee, Pamela Harris and Matthew Cot\'{e}, for the careful reading of the thesis and their insightful questions. Finally, we would like to thank the 2015 Summer Undergraduate Applied Mathematics Institute (SUAMI) at Carnegie Mellon University for inspiring the research problem.

\section{Properties of Origami Rings}

\subsection{The intersection operator} 

Let $U\subset \mathbb T$, as before. There are important properties of the $I_{u,v}(p,q)$ operator that are integral for us to prove our theorem. 

\par Let $u,v\in U$ be two distinct angles. Let $p,q$ be points in $R(U)$. Consider the pair of intersecting lines $L_u(p)$ and $L_v(q)$. In \cite{Buhler2010}, it is shown that we can express $I_{u,v}(p,q)$ as 
\begin{equation}\tag{$\ast$}
I_{u,v}(p,q)=\frac{u\bar pv-\bar upv}{u\bar v-\bar uv}+\frac{q\bar vu-\bar qvu}{\bar uv-u\bar v}=\frac{[u,p]}{[u,v]}v+\frac{[v,q]}{[v,u]}u
\label{closedform}
\end{equation}
where $[x,y]=\overline{x}y-x\overline{y}$. 

From the algebraic closed form (\ref{closedform}) of the intersection operator, we can see by straightforward computation that the following properties hold for for $p,q,u,v\in \C$.

\begin{description}
\item[Symmetry] $I_{u,v}(p,q)=I_{v,u}(q,p)$
\item[Reduction] $I_{u,v}(p,q)=I_{u,v}(p,0)+I_{v,u}(q,0)$
\item[Linearity] $I_{u,v}(p+q,0)=I_{u,v}(p,0)+I_{u,v}(q,0)$ and $rI_{u,v}(p,0)=I_{u,v}(rp,0)$ where $r\in\R$.
\item[Projection] $I_{u,v}(p,0)$ is a projection of $p$ on the line $\{rv:r\in\R\}$ in the $u$ direction.
\item[Rotation] For $w\in \mathbb{T}$, $wI_{u,v}(p,q)=I_{wu,wv}(wp,wq)$.
\end{description}

\subsection{An illustrative example}

Let $S=\{0,1\}$ be our set of seed points. Now, let $U=\{1,e^{i\pi/4},i\}$. This is clearly not a group, since $e^{\frac{i\pi}{4}}i=e^{\frac{3i\pi}{4}}\not\in U$. In figure \ref{fig: gauss}, we show the different stages of the construction, obtained by iterated intersections. 

\begin{figure}[!h]
  \includegraphics[height=.3\textheight]{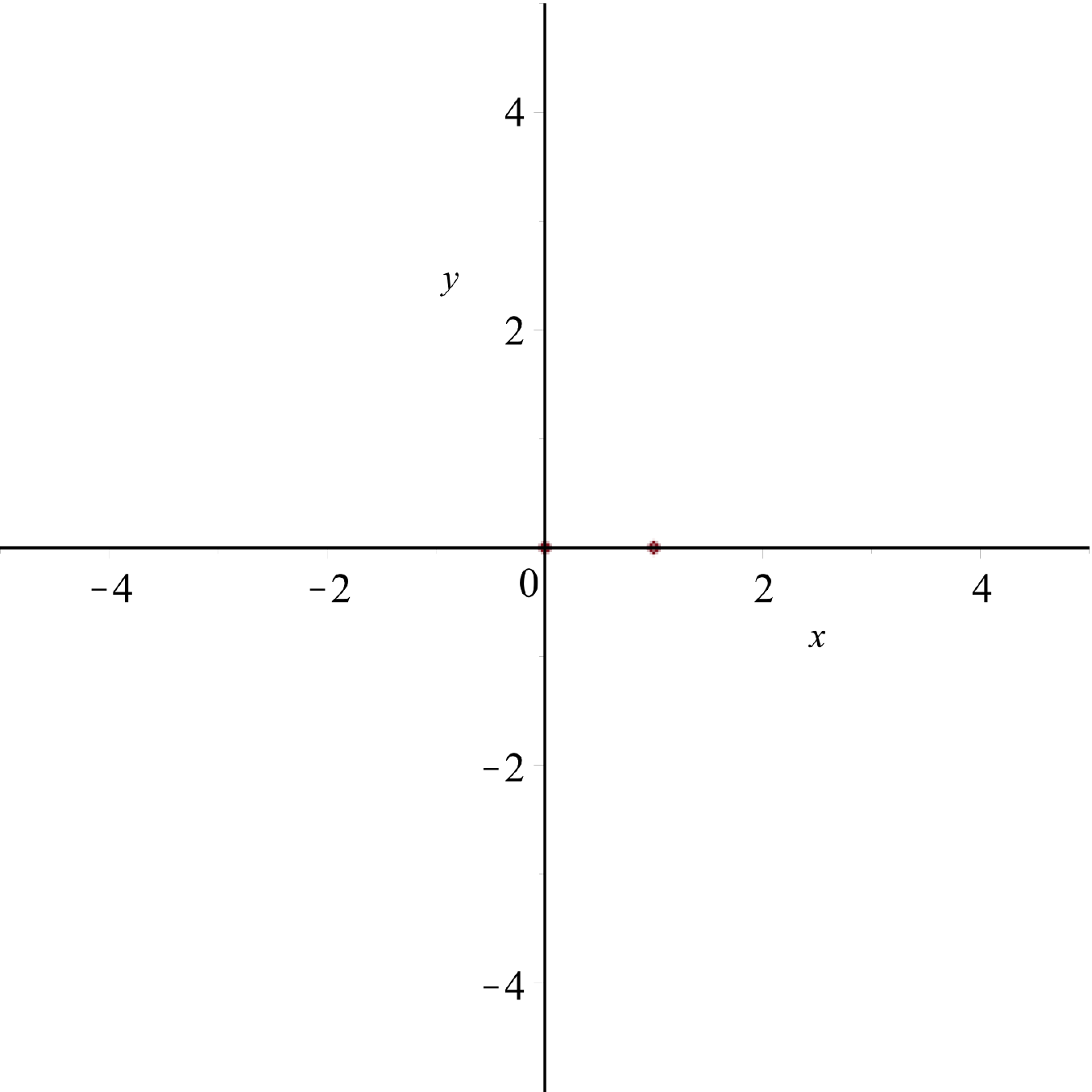}
  \includegraphics[height=.3\textheight]{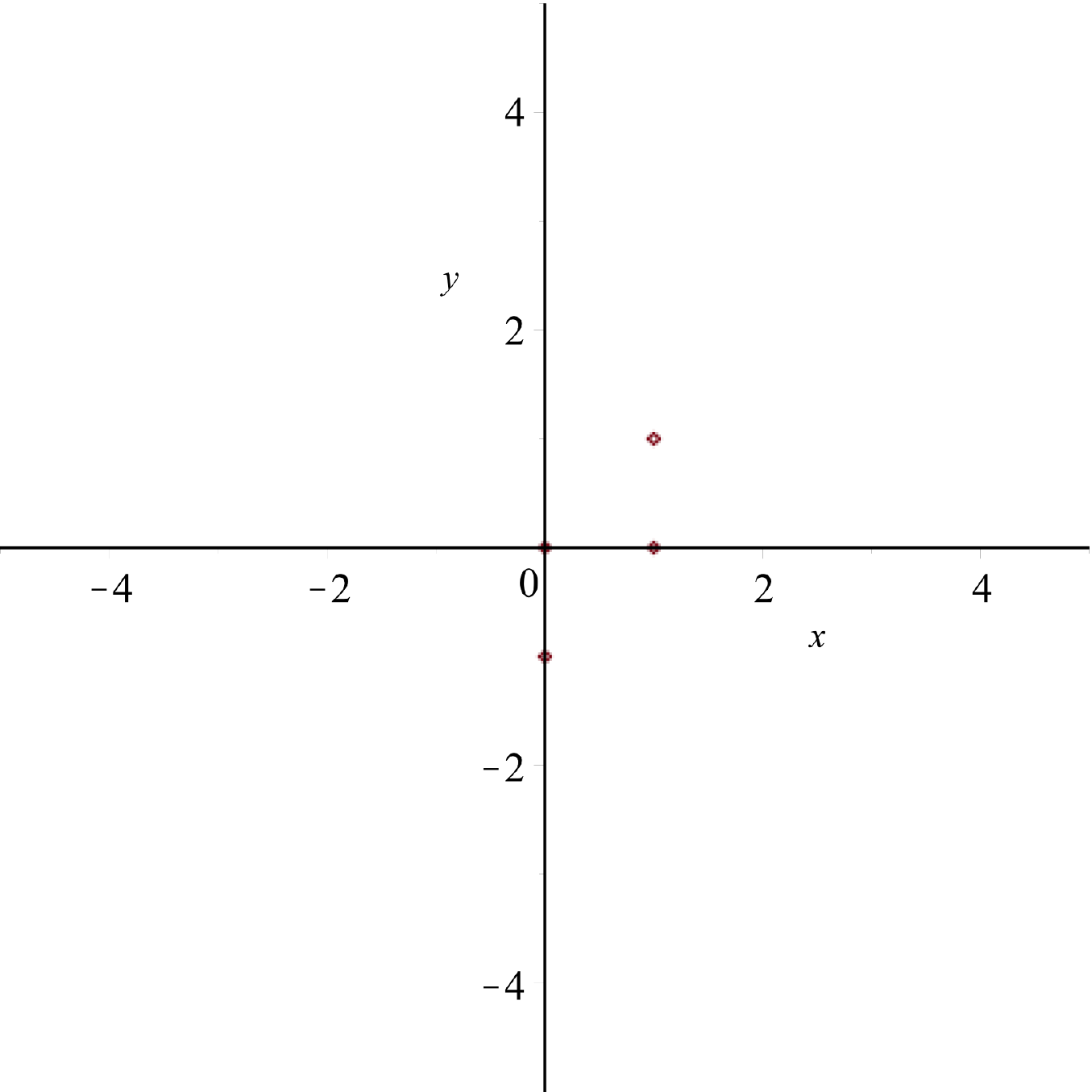}
  \includegraphics[height=.3\textheight]{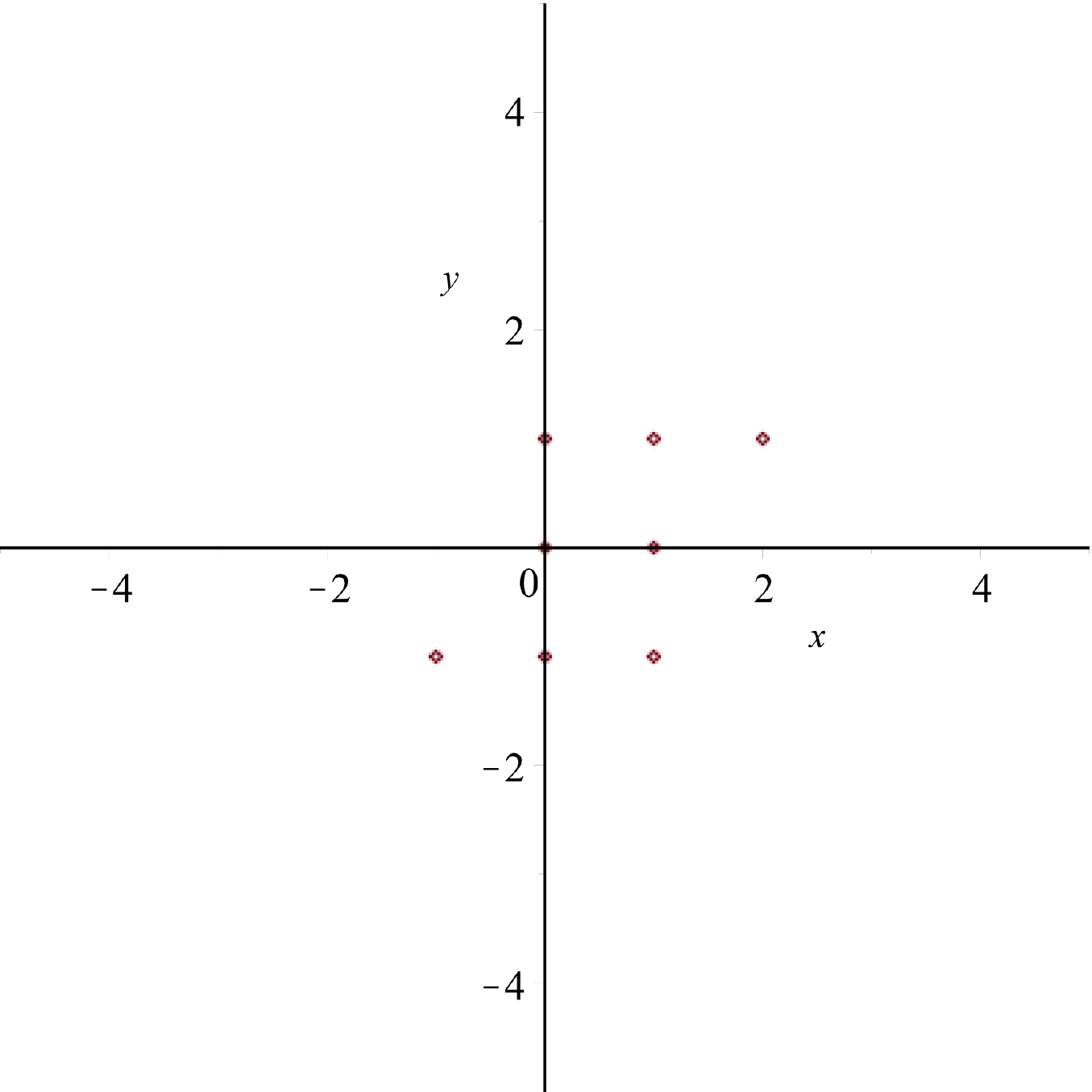}
  \includegraphics[height=.3\textheight]{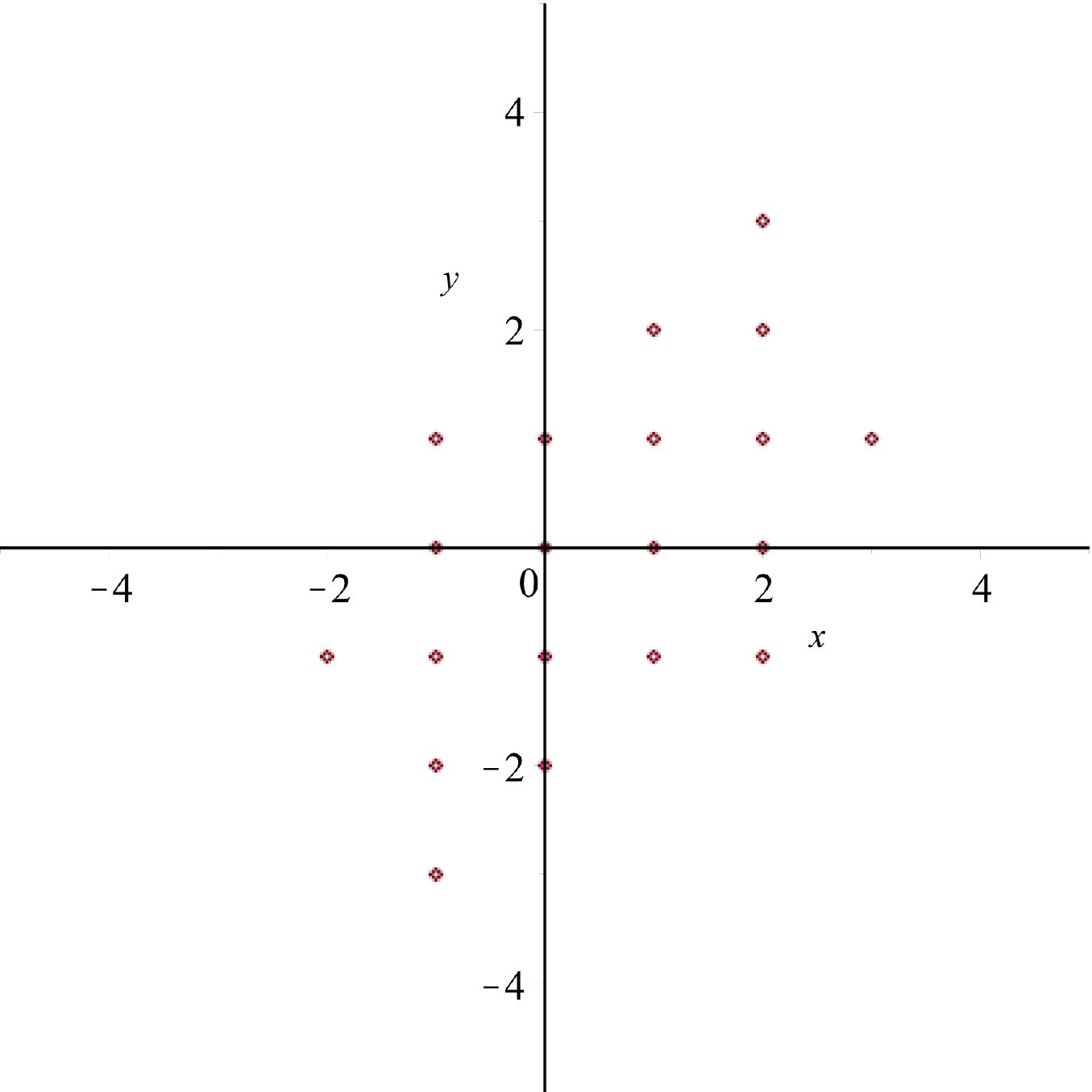}
  \includegraphics[height=.3\textheight]{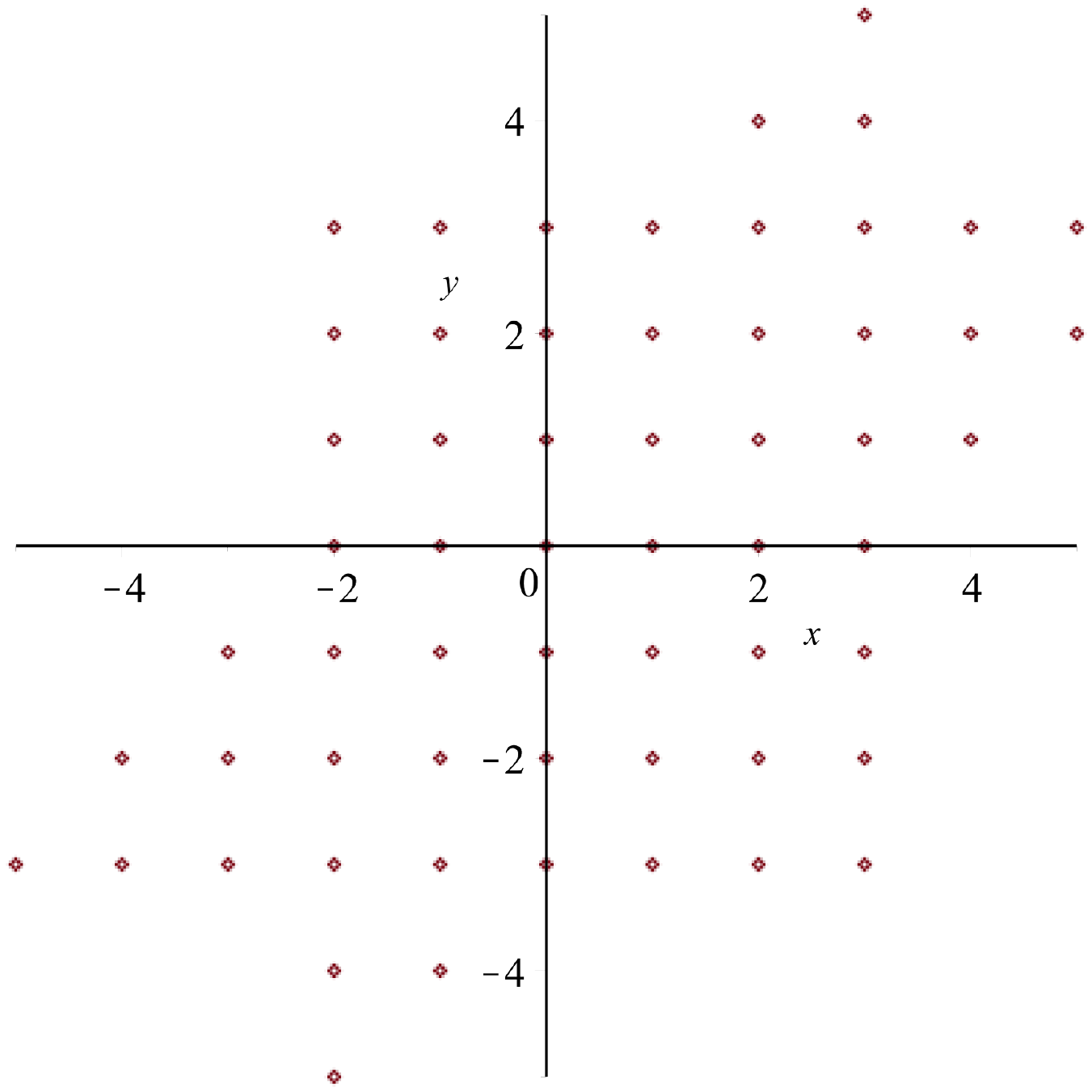}
  \includegraphics[height=.3\textheight]{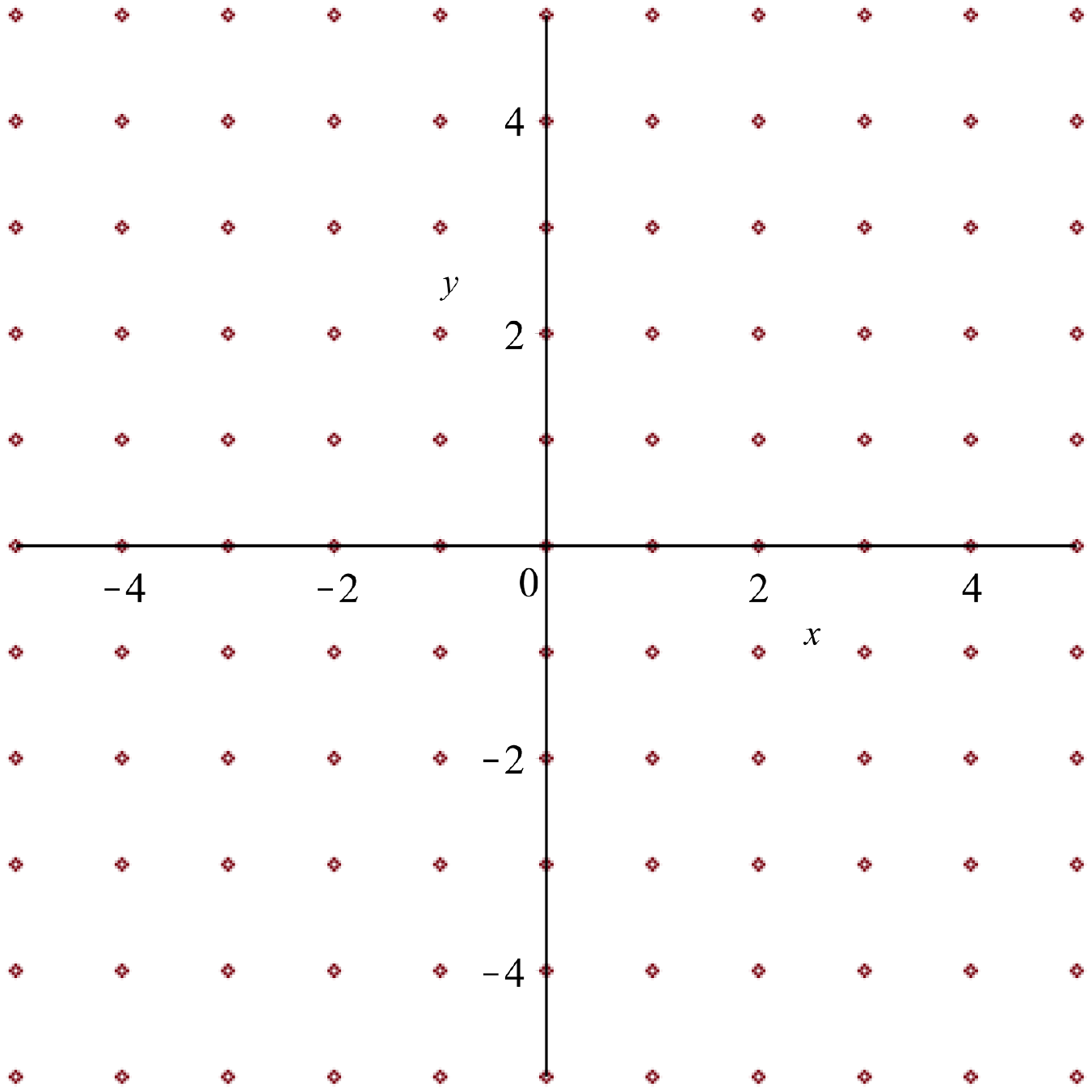}
  \caption[Generational Expansion]{Each successive graph shows all possible intersections from the previous graph using $U=\{1,e^{i\pi/4},i\}$ as our set of allowable angles.}\label{fig: gauss}
\end{figure}

\begin{rem} All the figures in this document were created by coding the algorithm for iterated intersections into Maple \cite{maple}.
\end{rem}

Notice that a pattern seems to emerge: the points constructed all have the form $a+bi$ where $a,b \in \Z$. This seems to indicate that this origami construction generates the Gaussian integers, a subring of $\mathbb{C}$. In fact, this is a special case of the main result of \cite{nedrenco}, where $z=i$. We prove in the next section that the ring of algebraic integers of an imaginary quadratic field can always be obtained through an origami construction.

\section{Constructing $\mathcal{O}(\mathbb{Q}(\sqrt{m}))$}

A natural question, related to the previous section is this: Which subrings of $\C$ can be generated through an origami construction, that is, which subrings are origami rings? We have seen that the cyclotomic integers $\Z[\zeta_n]$, where $n$ is prime, are origami rings by Theorem \ref{Buhler}. 

Let $m<0$ be a square-free integer, so $\Q(\sqrt{m})$ is an imaginary quadratic field. Denote by $\mathcal{O}(\Q(\sqrt{m}))$ the ring of algebraic integers in $\Q(\sqrt{m})$. Recall that a complex number is an algebraic integer if and only if it is the root of some monic polynomial with coefficients in $\Z$. Then we have the following well-known theorem (for details see, for example, \cite[pg. 15]{marcus}). 

\begin{theorem}
The set of algebraic integers in the quadratic field $\Q (\sqrt{m})$ is 
\begin{align*}
\{a+b\sqrt{m}: a,b\in\Z\} & \text{ if $m\equiv 2$ or $3 \mod 4$}\\
\left\{\frac{a+b\sqrt{m}}{2}: a,b\in\Z , a\equiv b \mod 2\right\} & \text{ if $m\equiv 1 \mod 4$}
\end{align*}
\end{theorem}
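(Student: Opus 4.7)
The plan is to characterize algebraic integers $\alpha \in \Q(\sqrt m)$ directly via their minimal polynomial. Every element of $\Q(\sqrt m)$ can be written uniquely as $\alpha = r + s\sqrt m$ with $r,s \in \Q$. If $s = 0$, then $\alpha = r$ is an algebraic integer iff $r \in \Z$. If $s \neq 0$, then since $m$ is square-free and not a perfect square, the polynomial
\[
f(x) = (x-r)^2 - s^2 m = x^2 - 2rx + (r^2 - s^2 m)
\]
is irreducible over $\Q$ and hence is the minimal polynomial of $\alpha$. Thus $\alpha \in \mathcal O(\Q(\sqrt m))$ if and only if $2r \in \Z$ and $r^2 - s^2 m \in \Z$.

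Next I would use these two integrality conditions to pin down denominators. Writing $2r = a \in \Z$, the second condition gives $a^2 - 4s^2 m \in 4\Z$, so in particular $4s^2 m \in \Z$. Because $m$ is square-free, a standard argument (clearing any denominator of $2s$ in lowest terms leads to a contradiction with square-freeness) forces $2s \in \Z$; write $2s = b$. The conditions now read: $a,b \in \Z$ and $a^2 \equiv b^2 m \pmod 4$.

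The last step is to analyze this congruence modulo $4$ case by case, using that squares mod $4$ are only $0$ or $1$. When $m \equiv 2 \pmod 4$, the congruence $a^2 \equiv 2b^2 \pmod 4$ forces both $a$ and $b$ even, so $r,s \in \Z$. When $m \equiv 3 \pmod 4$, we get $a^2 + b^2 \equiv 0 \pmod 4$, again forcing both $a$ and $b$ even. When $m \equiv 1 \pmod 4$, the condition becomes $a^2 \equiv b^2 \pmod 4$, equivalently $a \equiv b \pmod 2$, and both parities are permitted; this gives precisely the elements $(a + b\sqrt m)/2$ with $a \equiv b \pmod 2$. (The case $m \equiv 0 \pmod 4$ is excluded by square-freeness.) Combining the three cases yields the stated description of $\mathcal O(\Q(\sqrt m))$.

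The main obstacle is the deduction that the denominators of $r$ and $s$ are bounded by $2$; this is where square-freeness of $m$ enters in an essential way, and it is also the step that separates this argument from the easier problem over a general number field. Everything else is routine algebra and congruence bookkeeping.
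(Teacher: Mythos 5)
The paper does not actually prove this theorem: it is quoted as a well-known fact with a pointer to Marcus's \emph{Number Fields}, so there is no in-paper argument to compare against. Your proposal is the standard textbook proof of that fact (integrality of the minimal polynomial forces $2r,2s\in\Z$ via square-freeness of $m$, followed by the mod $4$ analysis of $a^2\equiv b^2m$), and it is correct and complete.
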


And so, we can state our main theorem. 

\begin{theorem}\label{ourtheorem}
Let $m<0$ be a squarefree integer, and let $\theta=\arg(1+\sqrt{m})$. Then $\mathcal{O}(\Q(\sqrt{m}))=R (U)$ where
\begin{enumerate}
\item $U=\{1,i,e^{i\theta}\}$, if $m\equiv 2$ or $3 \mod 4$.
\item $U=\{1, e^{i\theta},e^{i(\pi-\theta)}\}$, if $m\equiv 1 \mod 4$.  
\end{enumerate}
\end{theorem}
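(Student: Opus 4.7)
The plan is to establish the two inclusions $R(U) \subseteq \mathcal{O}(\Q(\sqrt{m}))$ and $R(U) \supseteq \mathcal{O}(\Q(\sqrt{m}))$ separately. For the containment $R(U) \subseteq \mathcal{O}(\Q(\sqrt{m}))$, the seeds $0,1$ lie in the ring, so it suffices to check that $\mathcal{O}(\Q(\sqrt{m}))$ is closed under each intersection operator $I_{u,v}$ with $u,v \in U$. I would do this by direct computation in the natural $\Z$-module basis of the ring: $\{1,\sqrt{m}\}$ in Case (1), and $\{1,\omega\}$ with $\omega = (1+\sqrt{m})/2$ in Case (2). For each of the three pairs of directions, parametrize both lines and solve; the intersection point turns out to have coordinates that are integer combinations of the input coordinates, giving closure.

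For Case (1), the strategy for the reverse containment is first to produce $\sqrt{m}$ as an element of $R(U)$ via $1+\sqrt{m} = I_{i,e^{i\theta}}(1,0)$ and then $\sqrt{m} = I_{1,i}(1+\sqrt{m},0)$. Next, induct on $n$ to obtain $1 + n\sqrt{m} \in R(U)$ for every $n \in \Z$, using the recursion $I_{i,e^{i\theta}}(1+n\sqrt{m},0) = 1 + (n+1)\sqrt{m}$, which is powered by the fact that $e^{i\theta}$ is parallel to $1+\sqrt{m}$. Projecting onto the imaginary axis via $I_{1,i}(1+n\sqrt{m},0) = n\sqrt{m}$ then yields all integer multiples of $\sqrt{m}$, and the horizontal--vertical intersection $I_{1,i}(b\sqrt{m}, a) = a+b\sqrt{m}$ finishes the job. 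Negative indices follow by the mirror construction starting with $-\sqrt{m} = I_{i,e^{i\theta}}(0,1)$.

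Case (2) requires a subtler argument, since $U$ now contains no vertical direction. The plan is to start by producing $\omega = I_{e^{i\theta},e^{i(\pi-\theta)}}(0,1)$, exploiting that the two skew lines through $0$ and $1$ meet precisely at $\omega$. Working in the $\{1,\omega\}$-basis, the three intersection operators have clean closed forms; in particular, I expect to derive $I_{e^{i\theta},e^{i(\pi-\theta)}}(0,\,c+d\omega) = (c+d)\omega$, $I_{1,e^{i\theta}}(b\omega,\,a) = a+b\omega$, and $I_{1,e^{i(\pi-\theta)}}(0,\,1+n\omega) = n+1$. Chaining these yields the induction $n \in R(U) \Rightarrow n\omega \in R(U) \Rightarrow 1+n\omega \in R(U) \Rightarrow n+1 \in R(U)$, which bootstraps all of $\Z$ and $\Z\omega$ inside $R(U)$. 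Arbitrary elements $a+b\omega$ are then assembled using $I_{1,e^{i\theta}}(b\omega, a) = a+b\omega$.

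The main obstacle is the reverse inclusion, and within it, the lattice traversal in Case (2). The difficulty is that without a perpendicular pair of directions one cannot simply project onto coordinate axes, so finding the exact chain of intersections that simultaneously advances an integer index and an $\omega$-index takes some care. Once the intersection formulas are expressed correctly in the $\omega$-basis, the induction closes up routinely, so the crux of the proof lies in the initial algebraic bookkeeping of each $I_{u,v}$ in the quadratic integer basis.
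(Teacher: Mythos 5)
Your first inclusion ($R(U)\subseteq\mathcal{O}(\Q(\sqrt{m}))$) matches the paper's argument: both reduce to checking the finitely many intersection formulas in an integral basis. For the reverse inclusion your route is genuinely different --- the paper runs a double induction on pairs of adjacent lattice points, showing that from $\{n+k\sqrt{m},\,n+1+k\sqrt{m}\}$ one can construct all neighboring lattice points, whereas you build distinguished one-parameter families ($n\sqrt{m}$, $n\omega$, the integers) and assemble a general element with one final intersection. That organization is legitimate and arguably cleaner, but the step powering your Case (1) induction is false as stated: $I_{i,e^{i\theta}}(1+n\sqrt{m},0)=L_i(1+n\sqrt{m})\cap L_{e^{i\theta}}(0)$, and $L_i(1+n\sqrt{m})$ is the vertical line $\operatorname{Re}(z)=1$ for \emph{every} $n$, so the intersection equals $1+\sqrt{m}$ independently of $n$; it is not $1+(n+1)\sqrt{m}$. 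The induction therefore never leaves $1+\sqrt{m}$. The idea is salvageable --- for instance $I_{e^{i\theta},i}(n\sqrt{m},1)=1+(n+1)\sqrt{m}$ does advance the index, since the line of slope $\sqrt{|m|}$ through $n\sqrt{m}$ meets $\operatorname{Re}(z)=1$ at height $(n+1)\sqrt{|m|}$ --- but as written the chain breaks at its first link.

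Two smaller gaps. In Case (1) your finishing move $I_{1,i}(b\sqrt{m},a)=a+b\sqrt{m}$ presupposes that every integer $a$ already lies in $R(U)$, which you never establish; it does follow (e.g.\ $I_{1,e^{i\theta}}(0,n\sqrt{m})=-n$ once all $n\sqrt{m}$ are in hand), but it must be said. In Case (2) the chain $n\Rightarrow n\omega\Rightarrow 1+n\omega\Rightarrow n+1$ only climbs upward from the seeds $0,1$, so it yields the nonnegative integers and $n\omega$ for $n\ge 0$; you need a descending construction (for instance $I_{e^{i(\pi-\theta)},e^{i\theta}}(0,n)=n-n\omega$ followed by a horizontal projection onto $-n\omega$) to reach negative coefficients. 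The Case (2) identities you do state, namely $I_{e^{i\theta},e^{i(\pi-\theta)}}(0,1)=\omega$, $I_{1,e^{i\theta}}(b\omega,a)=a+b\omega$, and $I_{1,e^{i(\pi-\theta)}}(0,1+n\omega)=n+1$, all check out.
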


 Notice that the Gaussian integers are a special case of Theorem \ref{ourtheorem}.1.

\subsection{Proof of Theorem \ref{ourtheorem}.1}

Let $m\equiv 2$ or $3 \mod 4$ and $m<0$. Let $U=\{1,i,e^{i\theta}\}$ where $\theta$ is the principal argument of $1+\sqrt{m}$. 

\begin{lemma}\label{cases1}
$I_{u,v} (p,q)\in \Z[\sqrt{m}]$ whenever $u,v\in U$ and $p,q\in \Z[\sqrt{m}] $. 
\end{lemma}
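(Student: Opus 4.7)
The plan is to first collapse the problem to something very concrete by using the Reduction property: since $I_{u,v}(p,q) = I_{u,v}(p,0) + I_{v,u}(q,0)$, it suffices to show that $I_{u,v}(p,0) \in \Z[\sqrt{m}]$ for every pair of distinct $u,v \in U$ and every $p \in \Z[\sqrt{m}]$. This leaves only six ordered pairs $(u,v)$ to handle.

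A useful preliminary observation is that the closed form $(\ast)$ is invariant under scaling a direction argument by a positive real number: if $c > 0$, then $[cu,p] = c[u,p]$ and $[cu,v] = c[u,v]$, so the factors of $c$ cancel in both summands of $I_{u,v}(p,0) = \tfrac{[u,p]}{[u,v]}v + \tfrac{[v,0]}{[v,u]}u$. Since $e^{i\theta}$ and $\alpha := 1 + \sqrt{m}$ determine the same line through the origin (they differ by the positive real scalar $|1+\sqrt{m}| = \sqrt{1-m}$), I can replace $e^{i\theta}$ by $\alpha$ throughout the six calculations. This removes all appearances of $\sqrt{1-m}$ and keeps every intermediate quantity inside $\Z[\sqrt{m}]$, which is the whole point.

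The rest is a short direct computation. Write $p = a + b\sqrt{m}$ with $a,b \in \Z$, so $\bar{p} = a - b\sqrt{m}$ because $\overline{\sqrt{m}} = -\sqrt{m}$ (recall $m<0$). One computes once and for all
\[
[1,p] = 2b\sqrt{m}, \quad [i,p] = -2ia, \quad [\alpha,p] = 2(b-a)\sqrt{m},
\]
\[
[1,i] = 2i, \quad [1,\alpha] = 2\sqrt{m}, \quad [i,\alpha] = 2i,
\]
and then plugs the six ordered pairs $(u,v) \in \{(1,i),(i,1),(1,\alpha),(\alpha,1),(i,\alpha),(\alpha,i)\}$ into $I_{u,v}(p,0) = \tfrac{[u,p]}{[u,v]}v$. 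In each case the denominator divides the numerator cleanly and, after multiplying by $v \in \{1, i, \alpha\}$, the result is a $\Z$-linear combination of $1$ and $\sqrt{m}$ (e.g.\ $I_{1,\alpha}(p,0) = b\alpha = b+b\sqrt{m}$, $I_{\alpha,i}(p,0) = (b-a)\sqrt{m}$, $I_{\alpha,1}(p,0) = a-b$, and so on).

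The main obstacle is really only bookkeeping: making sure that on each of the six pairs the factor $v$ absorbs any unwanted $i$ or $\sqrt{m}$ in the denominator so that the output lands in $\Z[\sqrt{m}]$ rather than in a larger subring of $\C$. The scaling trick that replaces $e^{i\theta}$ with the algebraic integer $\alpha = 1+\sqrt{m}$ is what makes this bookkeeping routine; without it one has to carry the irrational factor $\sqrt{1-m}$ through the computation and then verify by hand that it cancels at the end.
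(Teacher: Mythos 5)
Your proof is correct and is at bottom the same argument as the paper's: a direct verification of the six ordered pairs of directions using the closed form $(\ast)$. The paper simply tabulates $I_{u,v}(p,q)$ for all six cases with $p=a+b\sqrt{m}$, $q=c+d\sqrt{m}$; your two preliminary reductions --- splitting off $q$ via the Reduction property so that only $I_{u,v}(p,0)=\tfrac{[u,p]}{[u,v]}v$ needs to be computed, and using the homogeneity $[cu,p]=c[u,p]$ for $c>0$ to replace $e^{i\theta}$ by $\alpha=1+\sqrt{m}$ --- are genuine streamlinings that the paper leaves implicit (its one worked example, $I_{1,i}$, avoids the direction $e^{i\theta}$ entirely, so the issue of the factor $\sqrt{1-m}$ never surfaces there). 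One small slip: $[i,\alpha]=\bar{\imath}\alpha-i\bar{\alpha}=-i(1+\sqrt{m})-i(1-\sqrt{m})=-2i$, not $2i$. This does not affect the conclusion of this lemma, since a sign change in a bracket only negates the corresponding term and $\Z[\sqrt{m}]$ is closed under negation; but if you carry the wrong sign forward you will not reproduce the paper's exact six formulas (e.g.\ case (6), $I_{e^{i\theta},i}(p,q)=c+(-a+b+c)\sqrt{m}$, which requires $I_{i,\alpha}(q,0)=c\alpha$ rather than $-c\alpha$), and those exact values are what the subsequent lemma $\Z[\sqrt{m}]\subseteq R(U)$ relies on.
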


\begin{proof}
Since there are three possible directions, there are ${3\choose 2}=6$ cases to consider. Let $p=a+b\sqrt{m}$ and $q=c+d\sqrt{m}$. Then
\begin{enumerate}
\item $I_{1,i} (p,q)=c+b\sqrt{m}$\\
\item $I_{1,e^{i\theta}} (p,q)=b+c-d+b\sqrt{m}$\\
\item $I_{i,1} (p,q)=a+d\sqrt{m}$\\
\item $I_{i,e^{i\theta}} (p,q)=a+ (a-c+d)\sqrt{m}$\\
\item $I_{e^{i\theta},1} (p,q)=a-b+d+d\sqrt{m}$\\
\item $I_{e^{i\theta},i} (p,q)=c+ (-a+b+c)\sqrt{m}$\\
\end{enumerate}

In other words, if $p,q\in\Z[\sqrt{m}]$, then so is $I_{u,v} (p,q)$. All of these can be obtained from straightforward computations using equation (\ref{closedform}). For example, 
$$I_{1,i} (p,q)=\frac{[1,a+b\sqrt{m}]}{[1,i]}i+\frac{[i,c+d\sqrt{m}]}{[i,1]}=c+b\sqrt{m}.$$

\end{proof}

\par This concludes the proof for the closure of the intersection operator. In other words, as long as our seed set starts with elements in $\Z[\sqrt{m}]$, then the intersections will also be in $\Z[\sqrt{m}]$. We can also express this claim as $R (U)\subseteq \Z[\sqrt{m}]$. It remains to be shown that any element in $\Z[\sqrt{m}]$ is also an element in $R (U)$.

\begin{lemma}
$\Z[\sqrt{m}]\subseteq R (U)$.
\end{lemma}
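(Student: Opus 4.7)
The plan is to build up all of $\Z[\sqrt{m}]$ inside $R(U)$ in three waves of explicit constructions, relying throughout on the six intersection formulas established in Lemma~\ref{cases1}. Starting from $\{0,1\}$, three direct applications of those formulas produce the initial building blocks
$$1+\sqrt{m} = I_{i,e^{i\theta}}(1,0), \qquad \sqrt{m} = I_{1,i}(1+\sqrt{m},0), \qquad -\sqrt{m} = I_{e^{i\theta},i}(1,0).$$

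With $\pm\sqrt{m}$ in hand, I would next prove by induction on $|n|$ that every $n\in\Z$ lies in $R(U)$. The inductive step rests on two identities, each a special case of Lemma~\ref{cases1}: for any $n\in R(U)$,
$$I_{1,e^{i\theta}}(\sqrt{m},n) = (n+1)+\sqrt{m}, \qquad I_{1,e^{i\theta}}(-\sqrt{m},n) = (n-1)-\sqrt{m}.$$
A second intersection of the form $I_{1,i}(0,\,\cdot\,)$ then projects away the $\sqrt{m}$-component and leaves $n\pm 1$. Starting from $n=0,1$ this propagates outward in both directions to yield $\Z \subseteq R(U)$.

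Once every integer is available, one further identity, $I_{e^{i\theta},i}(-n,0) = n\sqrt{m}$, produces all integer multiples of $\sqrt{m}$ using only $-n,0\in R(U)$. Finally, for arbitrary $a,b\in\Z$, the element $a+b\sqrt{m}$ is realized as $I_{1,i}(b\sqrt{m},a)$, assembled from the two ingredients $b\sqrt{m}$ and $a$ already constructed. This completes the inclusion $\Z[\sqrt{m}] \subseteq R(U)$.

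The main obstacle, in my view, is purely organizational rather than conceptual: at each stage one must choose the pair of directions in $U$ (and which role each of $p,q$ plays) so that the real and $\sqrt{m}$-parts of the resulting intersection are exactly what the next step demands. No technique beyond Lemma~\ref{cases1} is required, but the bookkeeping — particularly running the integer induction simultaneously in the positive and negative directions with the correct sign choice of $\pm\sqrt{m}$ — is the one place where care is needed.
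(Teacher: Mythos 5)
Your proof is correct --- I checked each of your eight intersection identities against the six cases of Lemma~\ref{cases1} and they all hold (e.g.\ $I_{i,e^{i\theta}}(1,0)=1+(1-0+0)\sqrt{m}$ by case (4), $I_{1,e^{i\theta}}(\sqrt{m},n)=1+n-0+\sqrt{m}$ by case (2), $I_{1,i}(0,q)=\mathrm{Re}(q)$ by case (1), and $I_{1,i}(b\sqrt{m},a)=a+b\sqrt{m}$ by case (1)). However, your route is genuinely different from the paper's. The paper runs a double induction on \emph{adjacent pairs}: from the pair $\{n+k\sqrt{m},\,n+1+k\sqrt{m}\}$ it constructs the four neighboring lattice points, so the lattice grows outward from the seed $\{0,1\}$ one cell at a time, mirroring the actual iterative origami process. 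You instead decompose the target coordinate-wise: first bootstrap $\pm\sqrt{m}$, then generate all of $\Z$ by a single induction (adding $\pm\sqrt{m}$ to shift by $\pm 1$ and then projecting back to the real axis), then produce $n\sqrt{m}$ from $-n$ in one step, and finally assemble $a+b\sqrt{m}=I_{1,i}(b\sqrt{m},a)$. Your version is shorter and replaces the paper's two-dimensional induction with a one-dimensional one plus two closing moves, which is a real simplification for this case; the paper's adjacent-pair scheme has the advantage that it transfers almost verbatim to the proof of Theorem~\ref{ourtheorem}.2, where the ring is the non-rectangular lattice $\left\{\frac{a+b\sqrt{m}}{2}: a\equiv b \bmod 2\right\}$ and your final assembly step $I_{1,i}(b\sqrt{m},a)$ has no direct analogue (indeed $i\notin U$ there). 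One small point of hygiene: you should note explicitly that $1$, $i$, and $e^{i\theta}$ determine pairwise distinct folds (which holds since $\theta=\arg(1+\sqrt{m})\in(0,\pi/2)$ for $m<0$), so every $I_{u,v}$ you invoke is actually defined.
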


\begin{proof}

\par Let $a+b\sqrt{m}$ be an element in in $\Z[\sqrt{m}]$. We want to show that it can be constructed from starting with $\{0,1\}$ and the given set $U$. 

%Notice that if we can construct $b\sqrt{m}$ and $ (1+b)\sqrt{m}$ from iterated intersections by starting at $\{0,1\}$, and we can construct $a$ by starting at $\{0,1\}$, then we can construct $a+b\sqrt{m}$ by starting at $\{b\sqrt{m},  (1+b)\sqrt{m}\}$ (essentially we're ``shifting" the construction of $a$). 

We can reduce the problem by showing that given points $\{n+k\sqrt{m},n+1+k\sqrt{m}\}$ we can construct $$n-1+k\sqrt{m}, n+2+k\sqrt{m}, n+ (k-1)\sqrt{m} \hspace{3pt} \text{and}  \hspace{3pt} n+1+ (k+1)\sqrt{m}.$$ In Figure \ref{redux} we give an illustration of this step of the proof. In essence, the following is the induction step to a double induction on the real and imaginary components of an arbitrary integer we are constructing. That is, we prove that for any two adjacent points in the construction, we can construct points that are adjacent in every direction. This, and the fact that our seed is $\{0,1\}$, is enough to show that we can construct all of the integers. 

\begin{figure}[!h]
  \includegraphics[ height=0.3\textheight]{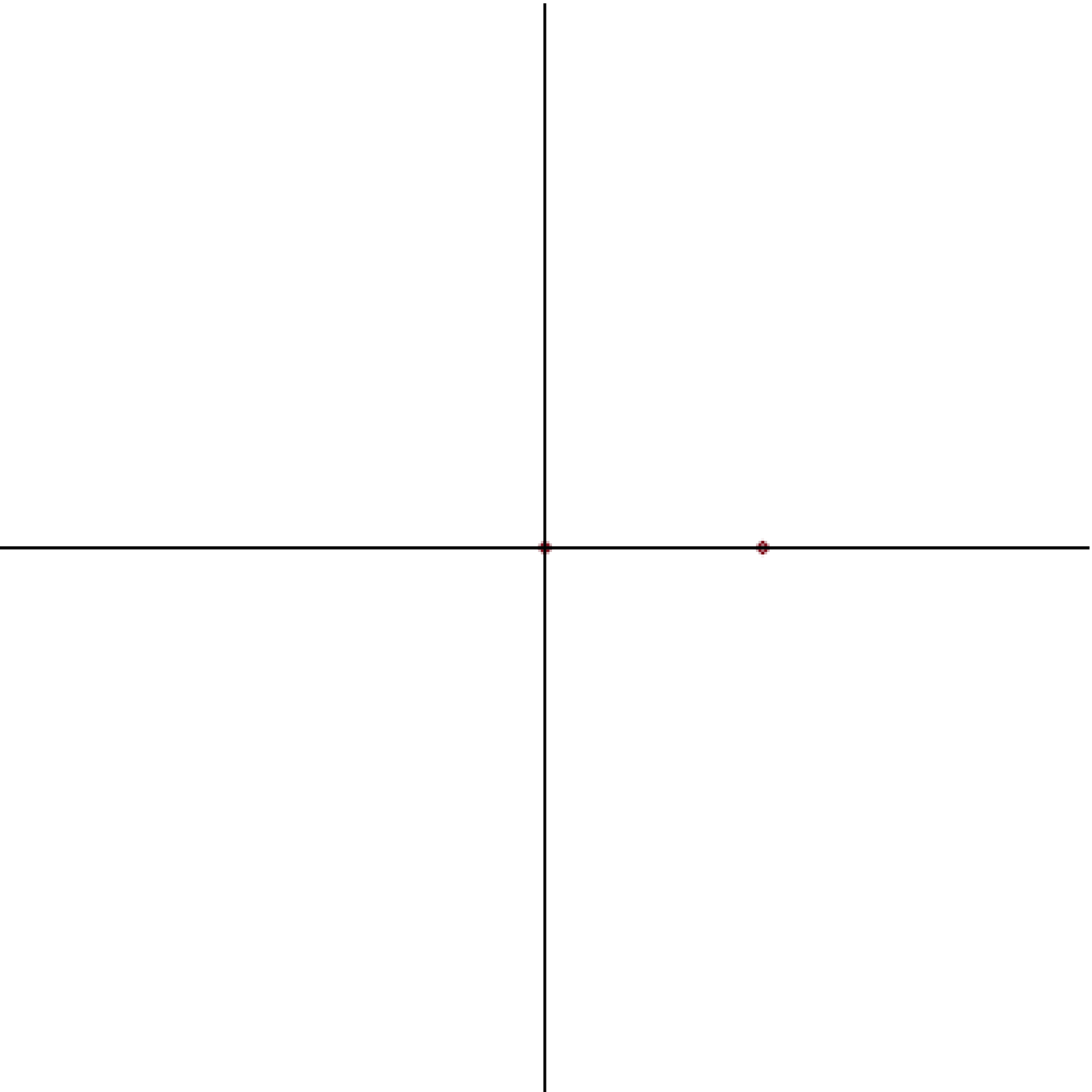}
  \includegraphics[ height=0.3\textheight]{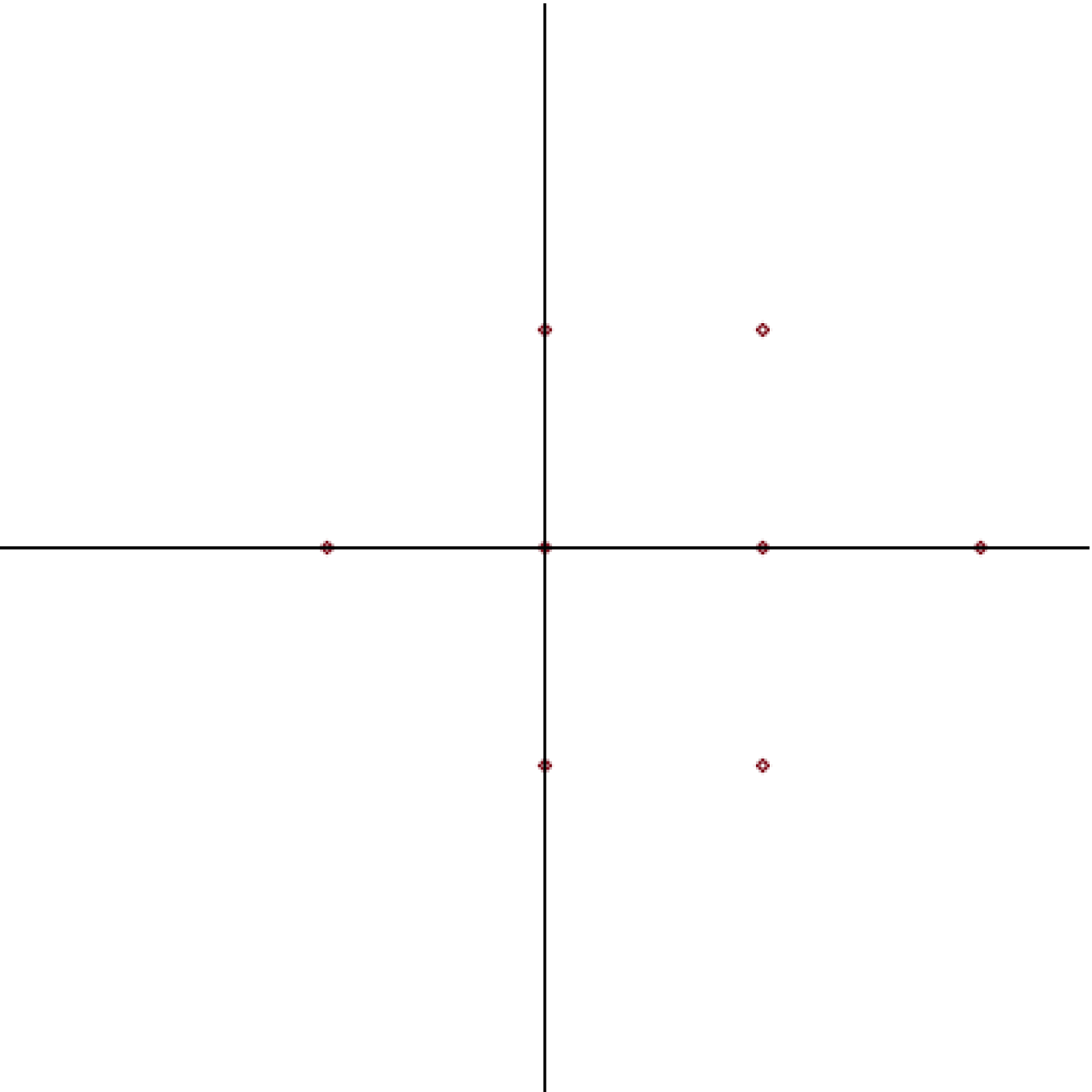}
  \caption[Reduction]{Given two points next to each other, we want to show that we can generate all the points immediately around them. This results in more points next to each other, upon which we can repeat the process. Notice that there are six adjacent points to an adjacent pair and we only prove this for four, but it is easy to see that this is enough. Think of this as two overlapping ``crosses". }\label{redux}
\end{figure}

We will now construct the desired points using the appropriate reference points.

\begin{description}
\item[Constructing $n+2+k\sqrt{m}$] Consider $$I_{i,1} (I_{1,e^{i\theta}} (I_{e^{i\theta},i} (n+k\sqrt{m},n+1+k\sqrt{m}),n+1+k\sqrt{m}),n+1+k\sqrt{m}).$$ Notice that we can evaluate this expression using the six cases enumerated in Lemma \ref{cases1}. In particular, we apply case  (6) first to get  $$I_{i,1} (I_{1,e^{i\theta}} (n+1+ (k+1)\sqrt{m},n+1+k\sqrt{m}),n+1+k\sqrt{m})$$ Next, we apply case  (2) to get  $$I_{i,1} (n+1+ (k+1)\sqrt{m},n+1+k\sqrt{m})$$ Finally, we use case  (3) to get $$n+2+k\sqrt{m}$$
\item[Constructing $n-1+k\sqrt{m}$] Consider $$I_{i,1} (I_{1,e^{i\theta}} (I_{i,e^{i\theta}} (n+k\sqrt{m},n+1+k\sqrt{m}),n+k\sqrt{m}),n+k\sqrt{m})$$ First, we apply case  (4) to get $$I_{i,1} (I_{1,e^{i\theta}} (n+ (k-1)\sqrt{m},n+k\sqrt{m}),n+k\sqrt{m})$$ Next, we apply case  (2) to get $$I_{i,1} (n-1+ (k-1)\sqrt{m},n)$$ Finally, we apply case  (3) to get $$n-1+k\sqrt{m}$$
\item[Constructing $n+ (k+1)\sqrt{m}$ and $n+1+ (k+1)\sqrt{m}$] Consider $$I_{e^{i\theta},i} (n+k\sqrt{m},n+1+k\sqrt{m})$$ Using case  (6) we get $$n+1+ (k+1)\sqrt{m}$$ Now consider $$I_{i,1} (n+k\sqrt{m},n+1+ (k+1)\sqrt{m})$$ Using case  (3) we get $$n+ (k+1)\sqrt{m}$$
\item[Constructing $n+ (k-1)\sqrt{m}$ and $n+1+ (k-1)\sqrt{m}$] Consider $$I_{i,e^{i\theta}} (n+k\sqrt{m},n+1+k\sqrt{m})$$ Using case  (4) we get $$n+ (k-1)\sqrt{m}$$ Consider $$I_{i,1} (n+1+k\sqrt{m},n+ (k-1)\sqrt{m})$$ Using case  (3) we get $$n+1+ (k-1)\sqrt{m}$$
\end{description}

\end{proof}

With this we have shown that $\Z[\sqrt{m}]\subseteq R (U)$, completing the proof.

\subsection{Proof of \ref{ourtheorem}.2}

The proof for \ref{ourtheorem}.2 employs the same strategy as the proof for \ref{ourtheorem}.1, with a subtle  difference given by the slightly different structure of the ring.

Let $m\equiv 1 \mod 4$ and $m<0$. Let $U=\{1, e^{i\theta},e^{i (\pi-\theta)}\}$ where $\theta$ is the principal argument of $1+\sqrt{m}$. 

\begin{lemma}
 $I_{u,v} (p,q)\in \mathcal{O}(\Q(\sqrt{m}))$ where $u,v\in U$ and $p,q\in \mathcal{O}(\Q(\sqrt{m}))$.
 \end{lemma}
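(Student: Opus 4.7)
The strategy is to mirror the proof of Lemma~\ref{cases1}: enumerate all pairs of distinct directions from $U$, apply the closed form (\ref{closedform}) to compute $I_{u,v}(p,q)$, and verify that each result lies in $\mathcal{O}(\Q(\sqrt m))$. The key initial observation is that the formula (\ref{closedform}) is invariant under replacing the direction vectors $u$ and $v$ by positive real scalar multiples of themselves, since $[u,p]$ and $[u,v]$ both scale by the same real factor and the ratio cancels. I would exploit this to substitute the cleaner representatives $1+\sqrt m$ and $-1+\sqrt m$ for $e^{i\theta}$ and $e^{i(\pi-\theta)}$ respectively; this keeps the entire calculation inside $\Q(\sqrt m)$ and avoids having to carry the factor $|1+\sqrt m|$ through each step.

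Write $p=(a+b\sqrt m)/2$ and $q=(c+d\sqrt m)/2$ with $a\equiv b\pmod 2$ and $c\equiv d\pmod 2$. The task reduces to showing that for each of the six ordered pairs $(u,v)$ of distinct directions, $I_{u,v}(p,q)$ can be written as $(A+B\sqrt m)/2$ with $A\equiv B\pmod 2$. Using the Symmetry property $I_{u,v}(p,q)=I_{v,u}(q,p)$, it suffices to treat the three unordered pairs $\{1,\,1+\sqrt m\}$, $\{1,\,-1+\sqrt m\}$, and $\{1+\sqrt m,\,-1+\sqrt m\}$.

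For the first two pairs, $[u,v]=\pm 2\sqrt m$, so the denominator of $2$ already present in $p$ and $q$ is preserved, and a direct computation shows that the parity condition $A\equiv B\pmod 2$ follows from the parity hypotheses on $a,b,c,d$. The main obstacle is the third pair: here $[u,v]=\pm 4\sqrt m$, and the closed form naturally produces denominator $4$ rather than $2$. This is precisely where the integrality of $p$ and $q$—the congruences $a\equiv b$ and $c\equiv d\pmod 2$, rather than mere membership in $\Z[\sqrt m,\tfrac{1}{2}]$—is essential: they force the numerator of each coefficient to be divisible by $2$, restoring denominator $2$, after which a final parity accounting (again using $a\equiv b$ and $c\equiv d$) yields $A\equiv B\pmod 2$. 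Working through the three cases in this manner completes the closure argument.
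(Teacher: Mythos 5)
Your proposal is correct and follows the same basic route as the paper: enumerate the pairs of directions, evaluate the closed form (\ref{closedform}), and check membership in $\mathcal{O}(\Q(\sqrt{m}))$ via the parity conditions. Your two streamlining devices --- replacing $e^{i\theta}$ and $e^{i(\pi-\theta)}$ by the real scalar multiples $1+\sqrt{m}$ and $-1+\sqrt{m}$ (legitimate, since $[u,p]$ and $[u,v]$ scale by the same real factor), and halving the case count via Symmetry --- are sound and make the computation cleaner than the paper's six separate cases. The one substantive point where you diverge from the printed proof is the bookkeeping of denominators, and there you are the one who is right: with $p=(a+b\sqrt{m})/2$ and $q=(c+d\sqrt{m})/2$, the pair $\{e^{i\theta},e^{i(\pi-\theta)}\}$ really does give $[u,v]=\pm 4\sqrt{m}$ and hence a denominator of $4$; for instance $I_{e^{i\theta},e^{i(\pi-\theta)}}(0,1)=\tfrac{1+\sqrt{m}}{2}$ (both lines pass through that point), whereas the paper's case (4) would give $1+\sqrt{m}$. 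Indeed, all six displayed formulas in the paper are exactly twice the correct values --- they appear to have been computed for $p=a+b\sqrt{m}$, $q=c+d\sqrt{m}$ without the halving --- though the lemma's conclusion survives. Your extra step for the third pair, namely using $a\equiv b$ and $c\equiv d\pmod 2$ once to show both numerator coefficients $a-b+c+d$ and $b-a+c+d$ are even, and then again to verify the parity condition on their halves (their difference is $a-b$, which is even), is precisely what is needed, and it is the point where the hypothesis $p,q\in\mathcal{O}(\Q(\sqrt{m}))$ rather than merely $p,q\in\tfrac12\Z[\sqrt{m}]$ is genuinely used. So the proposal is not only correct but quietly repairs a factor-of-two slip in the paper's computation.
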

 
 \begin{proof}
 
  Again, there are six cases to consider. Let $p=\frac{a+b\sqrt{m}}{2}$ and $q=\frac{c+d\sqrt{m}}{2}$, where $a\equiv b\bmod 2$ and $c\equiv d\bmod2$. 

\begin{enumerate}
\item $I_{1,e^{i\theta}} (p,q)=b+c-d+b\sqrt{m}$\\
\item $I_{1,e^{i (\pi-\theta)}} (p,q)=c+d-b+b\sqrt{m}$\\
\item $I_{e^{i\theta},1} (p,q)=a-b+d+d\sqrt{m}$\\
\item $I_{e^{i\theta},e^{i (\pi-\theta)}} (p,q)=\frac{ (a-b+c+d)+ (b-a+c+d)\sqrt{m}}{2}$\\
\item $I_{e^{i (\pi-\theta)},1} (p,q)=a+b-d+d\sqrt{m}$\\
\item $I_{e^{i (\pi-\theta)},e^{i\theta}} (p,q)=\frac{ (a+b+c-d)+ (a+b-c+d)\sqrt{m}}{2}$\\
\end{enumerate}

All of these cases can be obtained, again, by straightforward computation using (\ref{closedform}), and are left as a exercise. Notice that (1), (2), (3), and (5) all are clearly in the ring of algebraic integers. The only additional fact to show is that (4) and (6) are as well. But it's easy to see that $$a-b+c+d\equiv b-a+c+d \bmod 2,$$  since $a\equiv b\bmod 2$. And similarly $$a+b+c-d\equiv a+b-c+d \bmod 2,$$ because $c\equiv d\bmod 2$. 
\end{proof}

\par This concludes the proof for the closure of the intersection operator. In other words, as long as our seed set starts with elements in $\mathcal{O}(\Q(\sqrt{m}))$, then the intersections will also be in $\mathcal{O}(\Q(\sqrt{m}))$. That is, $R (U)\subseteq \mathcal{O}(\Q(\sqrt{m}))$. It remains to be shown that any element in $\mathcal{O}(\Q(\sqrt{m}))$ is also an element in $R (U)$.

\begin{lemma}
$\mathcal{O}(\Q(\sqrt{m}))\subseteq R(U)$.
\end{lemma}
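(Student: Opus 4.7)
The plan is to mimic the double induction used in the proof of Theorem \ref{ourtheorem}.1, now adapted to the triangular lattice structure of $\mathcal{O}(\Q(\sqrt{m})) = \Z + \Z\omega$, where $\omega = \frac{1+\sqrt{m}}{2}$. Because each lattice point has six nearest neighbors instead of four, and because consecutive "rows" (points of equal $\sqrt{m}$-coefficient) are offset by half a unit horizontally, the reduction step must advance simultaneously in the two generating directions $1$ and $\omega$.

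First, I would establish the base case $\omega \in R(U)$. A direct application of case (4) of the preceding lemma to $(p, q) = (0, 1)$ realizes the intersection of $L_{e^{i\theta}}(0)$ with $L_{e^{i(\pi-\theta)}}(1)$ as the apex of the isosceles triangle on the base $[0, 1]$ with legs in the directions $e^{\pm i\theta}$; this apex is exactly $\omega$, since $\omega$ is the unique point of the upper half plane with $|\omega| = |1 - \omega|$ whose argument from $0$ is $\theta$. Together with the seeds $\{0, 1\}$, this produces the initial configuration $\{0, 1, \omega\}$ from which the induction can be launched.

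Next, I would prove the main reduction: given a horizontally adjacent pair $\{p, p+1\} \subseteq R(U)$, construct the four diagonal neighbors $p + \omega,\ p + 1 + \omega,\ p - \omega,\ p + 1 - \omega$ together with the horizontal extensions $p - 1,\ p + 2$. Sample ingredients are: case (4) applied to $(p, p+1)$ yields $p + \omega$, and applied to $(p+1, p)$ (equivalently case (6) by Symmetry) yields $p + 1 - \omega$; case (1) applied to $(p+\omega, p+1)$ yields $p + 1 + \omega$, producing a horizontally adjacent pair $\{p+\omega, p+1+\omega\}$ one row up; case (2) applied to $(p+\omega, p)$ yields $p + \omega - 1$, giving the overlapping pair $\{p+\omega-1, p+\omega\}$ in the same row; case (3) applied to $(p, p + 1 - \omega)$ yields $p - \omega$, with case (5) playing the mirror role to generate the row-down pairs; and finally case (2) applied to $(p, p + 1 + \omega)$ and case (1) applied to $(p+1, p + \omega - 1)$ extend the original row to give $p + 2$ and $p - 1$. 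Iterating the reduction on the new adjacent pairs (now present in rows $B,\ B+1,\ B - 1$) propagates the construction in both the horizontal and $\omega$ directions, covering every element $n + k\omega \in \mathcal{O}(\Q(\sqrt{m}))$ by a double induction on $(n, k)$.

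The main obstacle is bookkeeping. Each of the six case formulas must be invoked with the correct ordering of its two arguments (since $I_{u, v}(p, q)$ is not symmetric in all four slots), and the intersections must be sequenced so that each new point is built from points already constructed in a previous step of the induction. Beyond this routine case analysis, the argument is structurally identical to the one given for Theorem \ref{ourtheorem}.1.
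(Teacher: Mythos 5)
Your proposal is correct and follows essentially the same strategy as the paper: a double induction whose reduction step starts from a horizontally adjacent pair and uses the six explicit intersection formulas to construct the neighboring lattice points in the same row and in the rows above and below, starting from the seed pair $\{0,1\}$. The differences are cosmetic --- you write the lattice as $\Z+\Z\omega$ rather than $\frac{a+b\sqrt{m}}{2}$ and choose slightly different compositions of the six cases for the horizontal extensions --- and your identifications of which intersection yields which point (e.g.\ case (4) applied to $(p,p+1)$ giving the apex $p+\omega$) agree with a direct evaluation of the intersection operator.
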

\begin{proof}
\par Let $\dfrac{a+b\sqrt{m}}{2}$ be an element in in $\mathcal{O}(\Q(\sqrt{m}))$. As before, we can reduce the problem to one of double induction. This is done by showing that given points
 $$\left\{\frac{n+k\sqrt{m}}{2},\frac{n+2+k\sqrt{m}}{2}\right\}$$
  we can construct $$\dfrac{n-2+k\sqrt{m}}{2}, \dfrac{n+4+k\sqrt{m}}{2}, \dfrac{n+1+ (k-1)\sqrt{m}}{2} \hspace{3pt} \text{and}  \hspace{3pt}\dfrac{n+1+ (k+1)\sqrt{m}}{2}.$$ Figure \ref{redux2} is an illustration of the points we want to construct.
  
\begin{figure}[!h]
  \includegraphics[ height=0.2\textheight]{redux12}
  \includegraphics[ height=0.2\textheight]{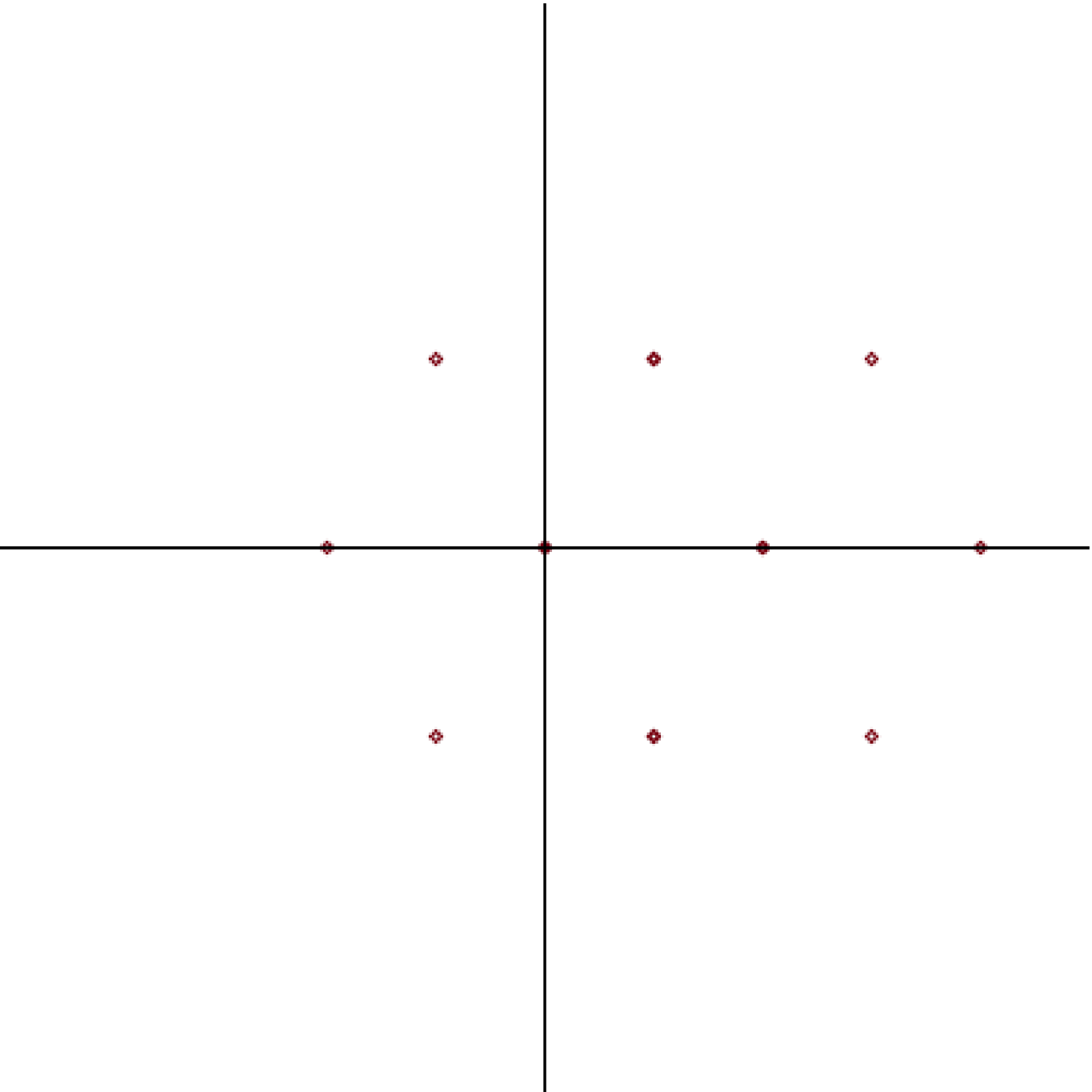}
  \includegraphics[ height=0.2\textheight]{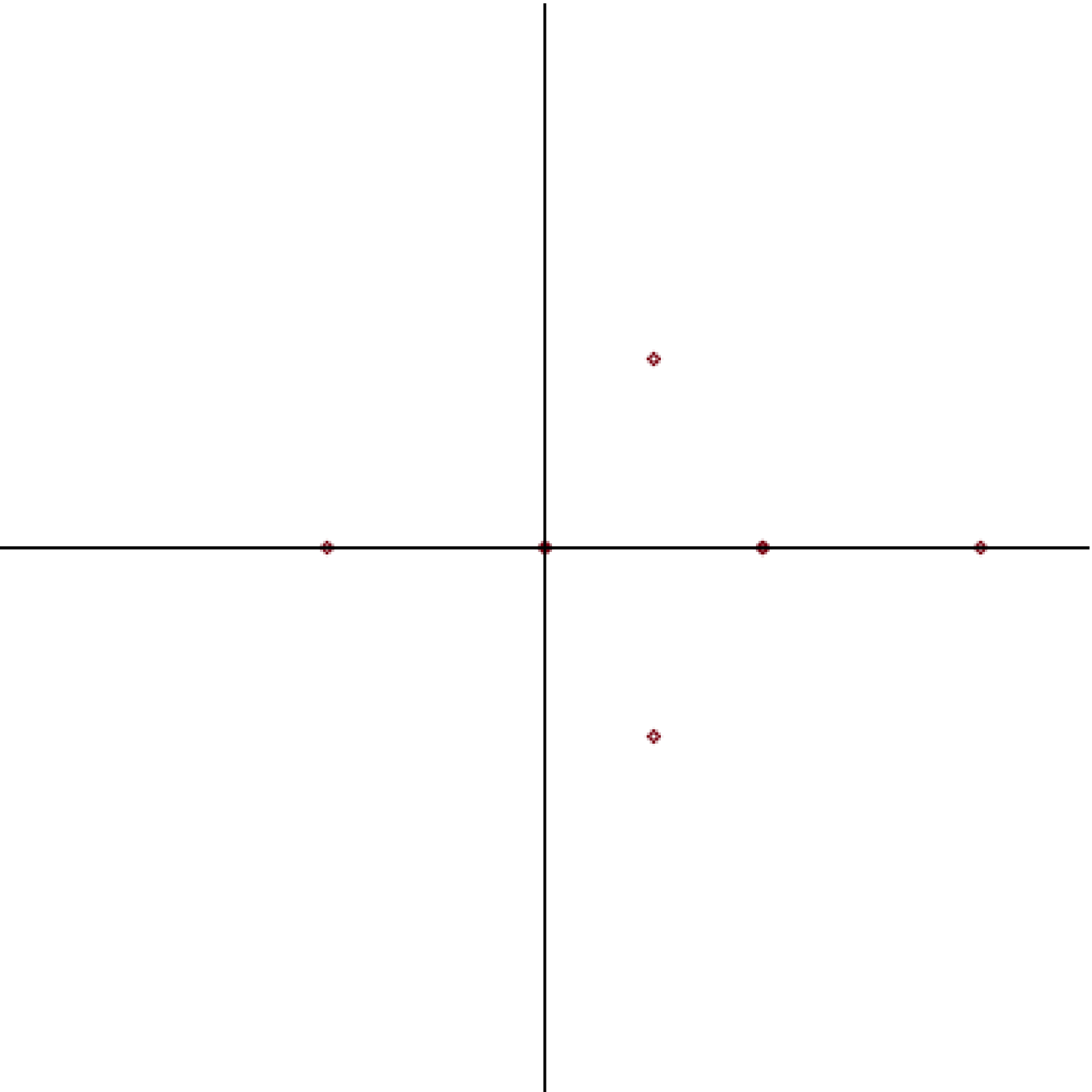}
  \caption[Reduction]{Given two points next to each other, we want to show that we can generate all the points immediately around them. Notice that this results in more points next to each other, upon which we can repeat the process. As before, we do not need to actually make all eight adjacent points, since it suffices to make the two points on either side of where we start, and the two points above and below where we start. The graph on the left illustrates the starting points. The graph in the center are all points adjacent to the two starting points. The graph on the right shows the points whose construction is sufficient to prove the theorem.}\label{redux2}
\end{figure}

  We will now show how to construct the desired points.

\begin{description}
\item[Constructing $\frac{n+1+ (k-1)\sqrt{m}}{2}$] Consider $$I_{e^{i \theta},e^{i (\pi-\theta)}} \left(\frac{n+k\sqrt{m}}{2},\frac{n+2+k\sqrt{m}}{2}\right)$$
By applying case  (4) from above, we see that $$I_{e^{i \theta},e^{i (\pi-\theta)}} \left(\frac{n+k\sqrt{m}}{2},\frac{n+2+k\sqrt{m}}{2}\right)=\frac{n+1+ (k-1)\sqrt{m}}{2}$$
\item[Constructing $\frac{n+1+ (k+1)\sqrt{m}}{2}$] Consider $$I_{e^{i (\pi-\theta)},e^{i \theta}} \left(\frac{n+k\sqrt{m}}{2},\frac{n+2+k\sqrt{m}}{2}\right)$$ 
By applying case  (6) from above, we see that $$I_{e^{i (\pi-\theta)},e^{i \theta}} \left(\frac{n+k\sqrt{m}}{2},\frac{n+2+k\sqrt{m}}{2}\right)=\frac{n+1+ (k+1)\sqrt{m}}{2}$$

\item[Constructing $\frac{n-2+k\sqrt{m}}{2}$] Consider $$I_{e^{i\theta},1} \left(I_{1,e^{i (\pi-\theta)}} \left(\frac{n+1+ (k+1)\sqrt{m}}{2},\frac{n+k\sqrt{m}}{2}\right),\frac{n+k\sqrt{m}}{2}\right)$$
By applying case  (1) from above, we can reduce the previous expression to $$I_{e^{i\theta},1} \left(\frac{n-1+ (k+1)\sqrt{m}}{2},\frac{n+k\sqrt{m}}{2}\right)$$
We further reduce the expression using case  (5) from above. The result is $$I_{e^{i\theta},1} \left(\frac{n-1+ (k+1)\sqrt{m}}{2},\frac{n+k\sqrt{m}}{2}\right)=\frac{n-2+k\sqrt{m}}{2}$$
\item[Constructing $\frac{n+4+k\sqrt{m}}{2}$] Consider $$I_{e^{i (\pi-\theta)},1} \left(I_{1,e^{\pi\theta}} \left(\frac{n+1+ (k+1)\sqrt{m}}{2},\frac{n+2+k\sqrt{m}}{2}\right),\frac{n+2+k\sqrt{m}}{2}\right)$$
By applying case  (5) from above, we can reduce the previous expression to $$I_{e^{i (\pi-\theta)},1} \left(\frac{n+3+ (k+1)\sqrt{m}}{2},\frac{n+2+k\sqrt{m}}{2}\right)$$
We further reduce the expression using case  (1) from above. The result is $$I_{e^{i (\pi-\theta)},1} \left(\frac{n+3+ (k+1)\sqrt{m}}{2},\frac{n+2+k\sqrt{m}}{2}\right)=\frac{n+4+k\sqrt{m}}{2}$$
\end{description}

\end{proof}

With this we have shown that $$\left\{\frac{a+b\sqrt{m}}{2}: a,b\in\Z , a\equiv b \mod 2\right\}\subseteq R (U)$$ completing the proof.

\subsection{Some final illustrations and remarks}

In Figure \ref{fig: gausspartial} we see that when we use $U=\{1,i,e^{i\theta}\}$ as our angle set, then the origami ring grows into the first and third quadrants, and bleeds into the others. As discussed, $R(U)=\Z[i]$. 

In Figure \ref{fig: partialhalfs} we see that when we use $U=\{1,e^{i\arg(1+\sqrt{-3})},e^{i(\pi-\arg(1+\sqrt{-3}))}\}$ as our angle set, then the origami ring grows along the real line, and slowly bleeds into the rest of the complex plane. 
This is an illustration of the second case of Theorem \ref{ourtheorem}. In this case, $R(U)=\mathcal{O}(\Q(\sqrt{-3}))$. 

Of course, $R(U)$ is assumed to be closed under the intersection operator, so the growth pattern doesn't matter in an abstract sense. However, computationally, it means that the number of steps it takes to construct a point is not related to that point's modulus. In fact, we get an entirely different measure of distance if we only consider the number of steps it takes to generate a point. One possible additional exploration is, given more general starting angles and points, to describe the dynamics of the iterative process. 

These examples also serve to illustrate the progression of the iterative process, which was coded into Maple to produce the graphs. 

\begin{figure}[!h]
  \includegraphics[ height=.3\textheight]{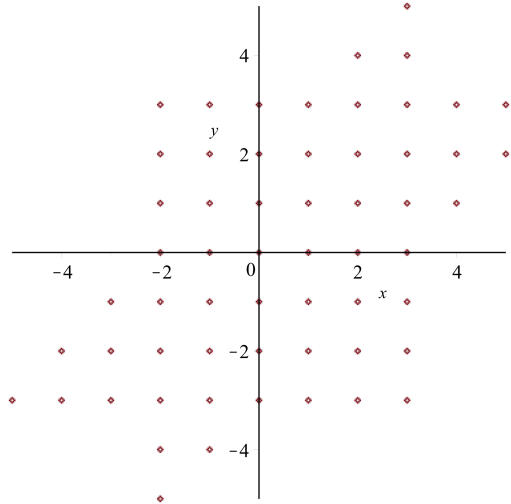}
  \caption[Five Generations of the Easy Case]{This graph depicts the first 5 generations of origami points using $U=\{1, i,e^{i\frac{\pi}{4}}\}$}\label{fig: gausspartial}
\end{figure}

\begin{figure}[!h]
  \includegraphics[ height=.3\textheight]{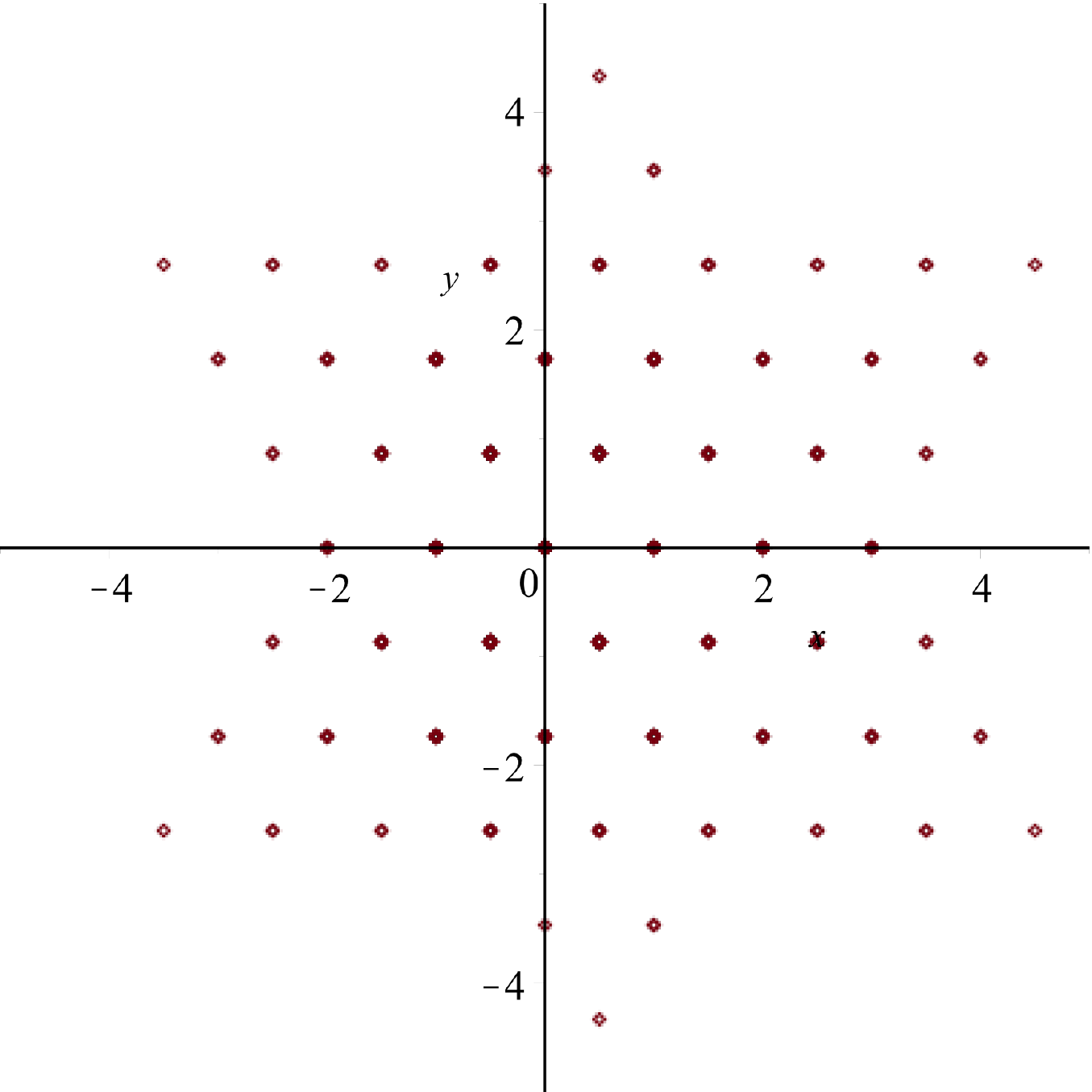}
  \caption[Five Generations of the Hard Case]{This graph depicts the first 5 generations of origami points using $$U=\{1,e^{i\arg(1+\sqrt{-3})},e^{i(\pi-\arg(1+\sqrt{-3}))}\}$$}\label{fig: partialhalfs}
\end{figure}
\pagebreak

%\bibliographystyle{empty} 
%\bibliography{thesis} 

\end{document}